\setlist[enumerate,1]{label={\normalfont{(\arabic*)}}}
\newtheoremstyle{mystyle}% name of the style to be used
{}% measure of space to leave above the theorem. E.g.: 3pt
{}% measure of space to leave below the theorem. E.g.: 3pt
{\normalfont}% name of font to use in the body of the theorem \bfseries \itshape \scshape \sfserief etc...\itfam
{}% measure of space to indent
{\bfseries}% name of head font
{}% punctuation between head and body
{0.5 em}% space after theorem head; " " = normal interword space
{\thmname{#1}\thmnumber{ #2}\normalfont\thmnote{ (#3)}\bfseries.}% Manually specify head
\newtheoremstyle{mystyle2}% name of the style to be used
{}% measure of space to leave above the theorem. E.g.: 3pt
{}% measure of space to leave below the theorem. E.g.: 3pt
{\itshape}% name of font to use in the body of the theorem \bfseries \itshape \scshape \sfserief etc...\itfam
{}% measure of space to indent
{\bfseries}% name of head font
{}% punctuation between head and body
{0.5 em}% space after theorem head; " " = normal interword space
{\thmname{#1}\thmnumber{ #2}\normalfont\thmnote{ (#3)}\bfseries.}% Manually specify head 
\theoremstyle{mystyle2}
\newtheorem{thm}{Theorem}[section]
\newtheorem*{thm*}{Theorem}
\newtheorem{prop}[thm]{Proposition}
\crefname{prop}{Proposition}{Propositions}
\newtheorem{lem}[thm]{Lemma}
\newtheorem*{lem*}{Lemma}
\newtheorem{kor}[thm]{Corollary}
\theoremstyle{mystyle}
\newtheorem{defi}[thm]{Definition}
\newtheorem*{defi*}{Definition}
\crefname{defi}{Definition}{Definitions}
\newtheorem{examp}[thm]{Example}
\crefname{examp}{Example}{Examples}
\newtheorem*{examp*}{Example}
\newtheorem{constr}[thm]{Construction}
\newtheorem*{constr*}{Construction}
\newtheorem{rem}[thm]{Remark}
\newtheorem{fact}[thm]{Fact}
\newtheorem{prob}[thm]{Problem}
\DeclareMathOperator{\chara}{char}
\DeclareMathOperator{\res}{res}
\newcommand{\sort}[1]{\mathbf{#1}}
\newcommand{\map}[1]{\mathrm{#1}}
\newcommand{\Z}{\mathbb{Z}}
\newcommand{\N}{\mathbb{N}}
\newcommand{\F}{\mathbb{F}}
\newcommand{\Q}{\mathbb{Q}}
\DeclareMathOperator{\Th}{Th}
\newcommand{\x}{^\times}
\newcommand{\U}{\mathcal{U}}
\newcommand{\C}{\mathcal{C}}
\renewcommand{\L}{\mathcal{L}}
\newcommand{\Lring}{\L_{\mathrm{ring}}}
\newcommand{\Loag}{\L_{\mathrm{oag}}}
\newcommand{\Lval}{\L_{\mathrm{val}}}
\newcommand{\Lgammak}{\L_{\Gamma k}}
\renewcommand{\O}{\mathcal{O}}
\newcommand{\barv}{\overline{v}}
\newcommand{\barw}{\overline{w}}
\newcommand{\CAKEeq}[0]{{\normalfont{CAKE}\({}^\equiv\)}}
\newcommand{\AKEeq}[0]{{\normalfont{AKE}\({}^\equiv\)}}
\let\@wraptoccontribs\wraptoccontribs
\title[Composition AKE principles and tame fields of mixed characteristic]{Composition Ax--Kochen/Ershov principles and tame fields of mixed characteristic}
\author{Margarete Ketelsen}
\address{Margarete Ketelsen, Institut für Mathematische Logik und Grundlagenforschung, Universität Münster, Einsteinstraße 61, 48149 Münster, Germany}
\email{margarete.ketelsen@uni-muenster.de}
\address{Philip Dittmann, Department of Mathematics, University of Manchester, Manchester M13 9PL, UK}
\email{philip.dittmann@manchester.ac.uk}
\begin{document}
	\maketitle
	
	\begin{abstract}
		We study in which settings we have a composition AKE principle, i.e. when the theory of the coarsening \((K,w)\) and the theory of the induced valuation \((Kw,\overline{v})\) determine the theory of the composition \((K,v)\).
		We show that this is the case when \((K,w)\) is tame of equal characteristic, and provide counterexamples in mixed characteristic.
		We further show that, for a tame field of mixed characteristic, the theory of the valued field cannot, in general, be determined solely by the theories of its underlying field, its residue field, and its value group.
	\end{abstract}
	
%	\tableofcontents

	\section{Introduction}
	First proved by Ax and Kochen and independently Ershov in the 1960s, an Ax--Kochen/Ershov (AKE) principle says that certain questions about valued fields can be reduced to the model theory of their residue fields and value groups, see \cite{ax-kochen1965diophantine,ershov1965elementary} for their original work covering henselian fields of equicharacteristic zero and \(\Q_p\).

	In this paper we want to understand the first-order theory of valued fields using a related approach:
	Instead of reducing to the residue field and the value group, we can \emph{decompose} a valued field into potentially easier to understand valuations of lower rank: a coarsening and a valuation induced on the residue field of the coarsening.
	
	Let \((K,v)\) be a valued field. 
	A valuation \(v'\) on \(K\) is called a \emph{coarsening} of \(v\) if \(\O_{v'}\supseteq\O_v\).
	Given a valued field \((K,v)\) and a coarsening \(v'\), we define the induced valuation \(\overline{v}\) of \(v\) on the residue field \(Kv'\) of \(v'\) to be the valuation with valuation ring
	\[
	\O_{\overline{v}}\coloneqq \res_{v'}(\O_v)\subseteq Kv'.
	\]
	We write \(v=\overline{v}\circ v'\) and say \(v\) \emph{decomposes} into \(\overline{v}\) and \(v'\).
	
	Conversely, we can also consider \emph{compositions} of valuations: 
	Given a valued field \((K,w)\) and a valuation \(u\) on its residue field \(Kw\) we can define \(v=u\circ w\) by letting
	\[
	\O_v\coloneqq \res_w^{-1}(\O_u).
	\]
	
	We consider valued fields as structures in the \emph{language of valued fields} \(\Lval=\Lring \cup \{\O\}\), where \(\Lring=\{0,1,+,-,\cdot\}\) is the language of rings and \(\O\) is a unary relation symbol, for the valuation ring.
	
	The main goal of this article is to investigate the following question:
	
	\begin{prob}
		\label{prob:cake}
		Let \((K,v)\) be a henselian valued field and let \(\barv\) be a valuation on \(Kv\).
		
		Is \(\Th(K,\barv\circ v)\) determined by \(\Th(K,v)\) and \(\Th(Kv,\barv)\)?
	\end{prob}
	This is also an AKE-type question as we again can reduce to the model theory of simpler objects. Here, these are the coarsening and the induced valuation.
	
	If the answer to \cref{prob:cake} is \enquote{yes}, we will say that the valued field \((K,\barv\circ v)\) with the given decomposition \(\barv\circ v\) has the composition \AKEeq{}-property (or \CAKEeq-property for short).
	However, unsurprisingly, often it suffices that the first valued field \((K,v)\) is nice enough (i.e. admits certain AKE properties). 
	Thus, we might also say that the valued field \((K,v)\) has the \CAKEeq{}-property, if for any \(\barv\) on the residue field \(Kv\), the valued field \((K,\barv\circ v)\) with the given decomposition \(\barv \circ v\) has the \CAKEeq{}-property.

	In particular, we want to study the \CAKEeq{}-property for tame valued fields. 
	A valued field \((K,v)\) is called \emph{tame} if it is algebraically maximal, its residue field \(Kv\) is perfect, and if \(\chara(Kv)=p>0\), the value group \(vK\) is \(p\)-divisible. 
	Tame fields have been introduced and studied by Franz-Viktor Kuhlmann in \cite{kuhlmann2016algebra}. They are well understood model-theoretically: tame fields of equal characteristic admit \AKEeq{}, while in mixed characteristic they at least still have \normalfont{AKE}\({}^\preceq\).
	
	We will see that the \CAKEeq{}-property holds for tame fields of equal characteristic, but fails in general for tame fields of mixed characteristic.

	In addition, we show that, in general, for tame fields of mixed characteristic, the \(\Lval\)-theory of \((K,v)\) is \textbf{not} determined by
	\begin{itemize}
		\item the \(\Lring\)-theory of \(K\),
		\item the \(\Lring\)-theory of \(Kv\), and
		\item the \(\Loag\)-theory of \(vK\).
	\end{itemize}
	This provides new counterexamples for tame \AKEeq{} in mixed characteristic with \textbf{the same underlying field}.
	Specifically, the algebraic part is not responsible for the failure of the \AKEeq{}-principle.
	
	In our counterexamples, the \AKEeq{}-principle fails because of the pointed value groups \((vK,v(p))\) are not elementarily equivalent, see \cref{rem:pointed-val-gp}.
	Philip Dittmann constructed other examples, where \AKEeq{} fails, but the pointed value groups, and the algebraic parts coincide, see \cref{appendix}.

	\section{Resplendent AKE and CAKE}
	
	It is well known, that the \CAKEeq{}-principle holds for henselian fields of equicharacteristic zero.
	This was made explicit in \cite[Lemma~6.5]{anscombe2024characterizing}, although the proof given there is not entirely correct (see \cref{rem:stable-embeddedness-is-a-trap}).
	
	In fact, \CAKEeq{} can also be seen as a consequence of \emph{resplendent \AKEeq{}}.
	
	Let \(\Lgammak=(\Lring,\Loag\cup\{\infty\},\Lring,\map{v},\map{res})\) be the \(3\)-sorted language of valued fields with sorts \(\sort{K}\) endowed with \(\Lring\), \(\sort{\Gamma}\) endowed with \(\Loag\cup \{\infty\}\) and \(\sort{k}\) endowed with \(\Lring\), and maps \(\map{v}\colon \sort{K}\to\sort{\Gamma}\) and \(\map{res}\colon \sort{K}\to \sort{k}\).
	
	\begin{defi}[Resplendent \AKEeq{}]
		We say a class of valued fields \(\C\) has \emph{resplendent \AKEeq{}} if for any \(\{\sort{\Gamma},\sort{k}\}\)-enrichment (see {\cite[Definition~A.2]{rideau2017some}}) \(\L\) of \(\Lgammak\), and \(\L\)-structures \((K,v,\ldots)\), \((L,w,\ldots)\) such that their reducts \((K,v)\) and \((L,w)\) down to \(\Lgammak\) are in \(\C\), we have
		\begin{align*}
			(Kv,\ldots)&\equiv (Lw,\ldots) \text{ in } \L|_\sort{k}
			\text{ and }
			(vK,\ldots)\equiv (wL,\ldots) \text{ in } \L|_\sort{\Gamma}\\
			&\quad \Longrightarrow \quad
			(K,v,\ldots)\equiv (L,w,\ldots) \text{ in } \L.
		\end{align*}
	\end{defi}
	
	For the class of equicharacteristic zero henselian valued fields, resplendent \AKEeq{} follows from a resplendent version of quantifier elimination in the Denef--Pas language, see \cite{pas1989uniform}.
	
	\begin{thm}[{\cite[Section~7.2]{vdD2014lectures}}]
		The class of equicharacteristic zero henselian valued fields admits resplendent \AKEeq{}.
	\end{thm}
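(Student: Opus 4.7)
My plan is to deduce resplendent \AKEeq{} from a resplendent form of Pas's quantifier elimination. Let \(\L_{\mathrm{Pas}}\) denote the Denef--Pas language, obtained from \(\Lgammak\) by adjoining an angular component symbol \(\mathrm{ac}\colon\sort{K}\to\sort{k}\), interpreted multiplicatively with \(\mathrm{ac}(0)=0\) and agreeing with the residue map on valuation units. Pas's theorem says that in \(\L_{\mathrm{Pas}}\) the theory of henselian equicharacteristic zero valued fields with angular component eliminates field-sort quantifiers, i.e.\ admits quantifier elimination relative to the sorts \(\sort{\Gamma}\) and \(\sort{k}\). The crucial point is that this QE is resplendent in the geometric sorts: the standard back-and-forth proof treats \(\sort{\Gamma}\) and \(\sort{k}\) as abstract structures, so the elimination procedure survives any enrichment by predicates or functions on those sorts.

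Given \(\L\)-structures \((K,v,\ldots)\) and \((L,w,\ldots)\) satisfying the hypotheses, I would first pass to sufficiently saturated elementary extensions in \(\L\) (which preserves the elementary equivalences on the \(\sort{\Gamma}\)- and \(\sort{k}\)-reducts). On any such saturated henselian equicharacteristic zero valued field an angular component map exists: a cross-section \(s\colon vK\to K\x\) can be built via saturation using the divisibility of \(vK\otimes_\Z\Q\), and one sets \(\mathrm{ac}(x)=\res(x/s(v(x)))\) for \(x\ne 0\). Equipping both structures with such angular components yields \(\L\cup\{\mathrm{ac}\}\)-structures; since \(\mathrm{ac}\) has domain \(\sort{K}\), the language \(\L\cup\{\mathrm{ac}\}\) is an enrichment of \(\L_{\mathrm{Pas}}\) purely by symbols on \(\sort{\Gamma}\) and \(\sort{k}\), and the \(\sort{\Gamma}\)- and \(\sort{k}\)-sort reducts of the two structures coincide with their \(\L\)-reducts.

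Applying the resplendent version of Pas's QE to \(\L\cup\{\mathrm{ac}\}\), every \(\L\)-sentence is equivalent, modulo the theory of such fields with angular component, to a Boolean combination of sentences in \(\L|_\sort{\Gamma}\) and \(\L|_\sort{k}\). The hypothesis then transfers equivalence to all \(\L\)-sentences, as required. The only non-routine step, and the one deserving closest attention, is confirming the resplendency of Pas's QE: this is not made explicit in the original paper, but is a feature of any treatment that separates the field-sort elimination from the abstract structure on \(\sort{\Gamma}\) and \(\sort{k}\), such as the exposition in \cite[Section~7.2]{vdD2014lectures}.
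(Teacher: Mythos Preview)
Your proposal is correct and follows precisely the route the paper indicates: the paper does not supply its own proof of this theorem but merely cites \cite[Section~7.2]{vdD2014lectures}, remarking in the sentence immediately preceding the statement that resplendent \AKEeq{} for equicharacteristic zero henselian fields follows from a resplendent version of quantifier elimination in the Denef--Pas language \cite{pas1989uniform}. Your sketch is a faithful expansion of exactly that observation.
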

	
	\begin{prop}[\CAKEeq{} from resplendent \AKEeq{}]
		\label{prop:cake-from-rake}
		Let \((L,w)\) and \((F,u)\) be valued fields with decompositions \(w=\barw\circ w'\) and \(u=\overline{u}\circ u'\).
		Suppose that \((L,w')\) and \((F,u')\) have resplendent \AKEeq{}.
		
		Then, 
		\[
		(L,w')\equiv (F,u') \quad\text{and}\quad (Lw',\barw)\equiv(Fu',\overline{u})\quad\Longrightarrow\quad (L,w)\equiv (F,u)  \text{ in } \Lval.
		\]
	\end{prop}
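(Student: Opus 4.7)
The plan is to apply resplendent \AKEeq{} to the coarsenings \((L,w')\) and \((F,u')\) using an enrichment of the residue sort that codes the induced valuation. Concretely, I would let \(\L\) be the expansion of \(\Lgammak\) obtained by adjoining a single unary predicate \(P\) on the residue sort \(\sort{k}\). Expand \((L,w')\) to an \(\L\)-structure by interpreting \(P\) on \(Lw'\) as \(\O_{\barw}\), and expand \((F,u')\) analogously by interpreting \(P\) on \(Fu'\) as \(\O_{\baru}\). This is a legitimate \(\{\sort{k},\sort{\Gamma}\}\)-enrichment in the sense required by resplendent \AKEeq{}, precisely because the induced-valuation data lives purely on the residue sort.

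Next I would check the two hypotheses of resplendent \AKEeq{}. On \(\sort{k}\), the enriched residue fields are just \((Lw',\barw)\) and \((Fu',\baru)\), so elementary equivalence in \(\L|_\sort{k} = \Lring\cup\{P\}\) is exactly the given assumption \((Lw',\barw) \equiv (Fu',\baru)\). On \(\sort{\Gamma}\), the enrichment adds nothing, and the required equivalence \(w'L \equiv u'F\) in \(\Loag\) is immediate from \((L,w')\equiv(F,u')\) in \(\Lval\), which transports to \(\Lgammak\). Resplendent \AKEeq{} then produces elementary equivalence in \(\L\) of the two enriched three-sorted structures.

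Finally, to recover \((L,w)\equiv(F,u)\) in \(\Lval\), I would use that the valuation ring of the composition satisfies
\[
\O_w \;=\; \res_{w'}^{-1}(\O_{\barw}) \;=\; \{x\in L : \O(x) \wedge P(\map{res}(x))\},
\]
and identically for \(\O_u\). Hence the one-sorted valued field \((L,w)\) is uniformly definable (without parameters) in the enriched three-sorted \(\L\)-structure built from \((L,w',\barw)\), and the same \(\L\)-formula defines \((F,u)\) in the other structure. \(\Lval\)-elementary equivalence therefore transfers from the \(\L\)-elementary equivalence obtained above. The argument is essentially formal once the enrichment is set up correctly; the only subtlety is noticing that the induced valuation ring sits on the residue sort alone, so that the enrichment is of the kind permitted by the resplendent hypothesis.
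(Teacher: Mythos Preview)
Your proposal is correct and follows essentially the same route as the paper: enrich \(\Lgammak\) by a unary predicate on \(\sort{k}\) for the induced valuation ring, verify the residue-field and value-group hypotheses of resplendent \AKEeq{}, and then recover \(\O_w\) and \(\O_u\) by the same interpretation formula for the composition. The only cosmetic difference is that the paper calls the new predicate \(\mathsf{O}\) (so that \(\L|_\sort{k}\) literally is \(\Lval\)) and explicitly remarks afterwards that only a \(\{\sort{k}\}\)-enrichment is used.
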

	\begin{proof}
		Let \(\L\coloneqq \Lgammak\cup\{\mathsf{O}\}\), where \(\mathsf{O}\) is a unary relation symbol on the residue field sort \(\sort{k}\), thus \(\L\) is a \(\{\sort{k}\}\)-enrichment of \(\Lgammak\).
		Note that \(\L_{|_\sort{k}}=\Lring\cup\{\mathsf{O}\}=\Lval\).
		The decomposed valued fields \((L,w',\overline{w})\) and \((F,u',\overline{u})\) are \(\L\)-structures, when considering the \(\Lgammak\)-structure of \((L,w')\) and \((F,u')\), and interpreting \(\mathsf{O}\) as the valuation ring of \(\overline{w}\) or \(\overline{u}\), respectively.
		
		Now, assume that \((L,w')\equiv(F,u')\) and \((Lw',\overline w)\equiv(Fu',\overline u)\) in \(\Lval\). As the \(\Loag\)-structure of the value group is interpretable in the \(\Lval\)-structure of the valued field, we get \(w'L\equiv u'F\) in \(\Loag\) and \((Lw',\overline w)\equiv(Fu',\overline u)\) in \(\Lval\).
		By resplendent \AKEeq{}, we get \((L,w',\overline{w})\equiv (F,u',\overline u)\) in \(\L\).
		
		Note that the \(\Lval\)-structure of the composed valued field \((L,w)\) is interpretable in the \(\L\)-structure \((L,w',\barw)\), as 
		\[
		x\in \O_w \quad \Longleftrightarrow \quad x\in \O_{w'} \text{ and } \res_{w'}(x)\in \O_{\overline{w}},
		\]
		and the same holds for \((F,u)\) and \((F,u',\overline u)\).
		Thus it follows that \((L,w)\equiv(F,u)\).
	\end{proof}
	
	\begin{rem}
		Note that in the proof of \cref{prop:cake-from-rake}, we only used \AKEeq{} for a \(\{\sort{k}\}\)-enrichment of \(\Lgammak\). 
		Thus, it would be enough to assume resplendent \AKEeq{} with respect to the residue field.
	\end{rem}
	
	We finish this section by proving resplendent \AKEeq{} for tame fields of equal characteristic. 
	This is a consequence of the relative embedding property for tame fields.
	\begin{defi}[{\cite[Section~6]{kuhlmann2016algebra}}]
		Let \(\C\) be a class of valued fields. 
		We say that \(\C\) has the \emph{relative embedding property} if whenever we have \((L,v),(K^*,v^*)\in\C\) with a common substructure \((K,v)\) such that 
		\begin{itemize}
			\item \((K,v)\) is defectless,
			\item \((K^*,v^*)\) is \(|L|^+\)-saturated,
			\item \(vL/vK\) is torsion free and \(Lv|Kv\) is separable, and
			\item there are embeddings \(\rho\colon vL\to v^*K^*\) over \(vK\) and \(\sigma\colon Lv\to K^*v^*\) over \(Kv\),
		\end{itemize}
		then there exists an embedding \(\iota\colon (L,v)\to (K^*,v^*)\) over \(K\) which respects \(\rho\) and \(\sigma\).
	\end{defi}
	\begin{examp}[{\cite[Theorem~7.1]{kuhlmann2016algebra}}]
		\label{ex:REP}
		The elementary class of tame fields has the relative embedding property.
	\end{examp}
	One has to repeat the proof steps for proving \AKEeq{} as in \cite[Lemma~6.1]{kuhlmann2016algebra} while paying some attention to the extra structure on the residue field and the value group.
	For the convenience of the reader we will sketch the proof.
	
	\begin{lem}[Resplendent relative subcompleteness from the relative embedding property]
		\label{lem:resplendent-relative-subcompleteness}
		Let \(\L\supseteq\Lgammak\) be a \(\{\sort{\Gamma},\sort{k}\}\)-enrichment of \(\Lgammak\). 
		Let \((L,w,\ldots)\) and \((F,u,\ldots)\) be \(\L\)-structures such that the reducts \((L,w)\) and \((F,u)\) to \(\Lgammak\) belong to an elementary class of defectless valued fields that has the relative embedding property. 
		Let \((K,v,\ldots)\) be a common substructure of \((L,w,\ldots)\) and \((F,u,\ldots)\) and assume that \((K,v)\) is defectless, \(wL/vK\) is torsion-free and \(Lw|Kv\) is separable.
		Then
		\begin{align*}
			(Lw,\ldots)&\equiv_{Kv} (Fu,\ldots) \text{ in } \L_{|_\sort{k}} \text{ and } (wL,\ldots)\equiv_{vK} (uF,\ldots) \text{ in } \L|_\sort{\Gamma}
			\\\quad &\Longrightarrow \quad (L,w,\ldots)\equiv_{K} (F,u,\ldots) \text{ in }\L.
		\end{align*}
	\end{lem}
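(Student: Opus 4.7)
The plan is to adapt the back-and-forth proof of \AKEeq{} from the relative embedding property, cf.\ \cite[Lemma~6.1]{kuhlmann2016algebra}, to the enriched setting, carrying the extra structure on the residue field and value group sorts along throughout.

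We first replace $(L,w,\ldots)$ and $(F,u,\ldots)$ by sufficiently saturated elementary $\L$-extensions (say, $\kappa$-saturated for $\kappa > |K|$). Many-sorted saturation descends to each sort, so both residue field sorts remain $\kappa$-saturated in their induced structures, and hence in the reduct to $\L|_\sort{k}$; similarly for the value group sorts in $\L|_\sort{\Gamma}$.

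We then construct a back-and-forth system of partial $\L$-elementary isomorphisms witnessing $(L,w,\ldots)\equiv_K (F,u,\ldots)$, starting from the identity on $K$. A typical member is an $\Lval$-isomorphism $\phi\colon (K_0,w)\to (K_0',u)$ between defectless valued subfields containing $(K,v)$, with $wK_0/vK$ torsion-free and $K_0 w/Kv$ separable, whose induced maps on residue field and value group are full $\L|_\sort{k}$- and $\L|_\sort{\Gamma}$-elementary embeddings over $Kv$ and $vK$, respectively. To absorb an element $a \in L$ into the domain of such a $\phi$, we first extend the residue field and value group components to accommodate the residue and value of $a$ (together with any further elements needed to preserve the defectlessness, torsion-freeness, and separability invariants). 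This extension is carried out on each sort separately, invoking saturation of the target sort in the richer language and the elementarity of the existing components, so that the new residue field and value group maps remain $\L|_\sort{k}$- and $\L|_\sort{\Gamma}$-elementary. We then apply the relative embedding property to lift these extensions to an $\Lval$-embedding of the enlarged defectless valued subfield into the target. The forth direction is symmetric.

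The main obstacle is arranging that the invariants---defectlessness of $(K_0,v)$, torsion-freeness of $wK_0/vK$, and separability of $K_0 w/Kv$---are preserved at every extension step. This is handled exactly as in Kuhlmann's proof: algebraic extensions are absorbed wholesale (using properties such as henselianity packaged into the hypotheses on the underlying class) before one proceeds with transcendental extensions, for which the invariants are automatic. The only substantively new content compared to the unenriched case is that the residue field and value group components must remain elementary in the richer languages $\L|_\sort{k}$ and $\L|_\sort{\Gamma}$, and this poses no additional difficulty thanks to the saturation of the target sorts in precisely those languages.
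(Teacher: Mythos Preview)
Your overall strategy---carry the enrichment on $\sort{k}$ and $\sort{\Gamma}$ through Kuhlmann's back-and-forth argument---is the same as the paper's. The implementation you sketch, however, deviates from the actual proof of \cite[Lemma~6.1]{kuhlmann2016algebra} in a way that leaves a real gap.

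You set up a back-and-forth system of isomorphisms between \emph{small} defectless valued subfields $K_0\subseteq L$ and $K_0'\subseteq F$, and at the forth step you propose to apply the relative embedding property to push ``the enlarged defectless valued subfield'' into the saturated target. But the relative embedding property, as stated, requires the structure being embedded to lie in the elementary class $\C$, not merely to be defectless. There is no reason your enlarged subfields lie in $\C$: a defectless valued subfield of a tame field need not be algebraically maximal, nor have $p$-divisible value group or perfect residue field. So the hypotheses of REP are not available at your extension step. (The invariants you do track---defectlessness of the base, torsion-freeness, separability---are the conditions imposed on the \emph{common substructure} in REP, not on the field to be embedded.) Your appeal to ``exactly as in Kuhlmann's proof'' does not help here, because Kuhlmann's Lemma~6.1 does not proceed via small subfields.

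The paper, following Kuhlmann's actual argument, runs the back-and-forth on chains of \emph{ultrapowers} $(L_i)_i$ and $(F_i)_i$ of the full structures $L$ and $F$. At each stage Keisler--Shelah supplies isomorphisms $\rho_i,\sigma_i$ of the enriched value-group and residue-field sorts, and then REP is invoked to embed all of $L_i$ into $F_{i+1}$ over the previous image (and symmetrically). Since every $L_i$, $F_i$ is an elementary extension of $L$ or $F$, it automatically lies in the elementary class $\C$, so the hypotheses of REP are met; and since the resulting valued-field embeddings $\iota_i$ induce the enriched isomorphisms $\rho_i,\sigma_i$, they are $\L$-embeddings for free. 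The union over the chain then gives an $\L$-isomorphism between elementary extensions of $(L,w,\ldots)$ and $(F,u,\ldots)$.
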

	\begin{proof}
		The proof is essentially the same as \cite[Proof of Lemma~6.1]{kuhlmann2016algebra} while paying some attention to the extra structure. We comment on the places where extra care is needed, while sketching out the entire proof.
		
		By the Keisler--Shelah isomorphism theorem \cite[Theorem~2.5.36]{marker2006model}, there are ultrapowers and isomorphisms
		\begin{align*}
			\rho_0\colon (wL,\ldots)^\U &\overset{\cong}{\longrightarrow} (uF,\ldots)^\U\text{ over \(vK\)}\\
			\sigma_0\colon (Lw,\ldots )^\U &\overset{\cong}{\longrightarrow} (Fu,\ldots)^\U\text{ over \(Kv\)}
		\end{align*}
		When ignoring the extra structure, \(\rho_0\) and \(\sigma_0\) can also be seen as isomorphisms of the underlying ordered abelian groups or fields, respectively.
		
		Now we do a back and forth construction.
		We construct chains \(((L_i,w,\ldots))_{i\in \N}\), \(((F_i,u,\ldots))_{i\in \N}\) by taking ultrapowers consecutively, choosing the ultrafilters in a way that we get sufficient saturation, and such that with the Keisler--Shelah isomorphism theorem \cite[Theorem~2.5.36]{marker2006model}, we get isomorphisms \(\rho_i\) and \(\sigma_i\) of the enriched value groups and residue fields, respectively, on each level. 
		In particular, again ignoring the extra structure, the \(\rho_i\) are isomorphisms of ordered abelian groups and the \(\sigma_i\) are isomorphisms of fields.
		
		Now, as in \cite[Proof of Lemma~6.1]{kuhlmann2016algebra} and using the relative embedding property, we inductively construct embeddings \(\iota_i\) (for \(i\) even) and \(\iota_i'\) (for \(i\) odd) of valued fields (for now without enrichment), inducing \(\rho_i\) and \(\sigma_i\), or in the case of the \(\iota_i'\), their inverses.
		Note, the \(\iota_i\) and \(\iota_i'\) are also embeddings of the enriched valued fields, because \(\rho_i\) and \(\sigma_i\) (and their inverses) were isomorphisms even with the extra structure on value group and residue field. 
		See \cref{fig:back-and-forth-drawing} for a drawing of the constructed embeddings.
		
		\begin{figure}[htb]
			\centering
			% https://tikzcd.yichuanshen.de/#N4Igdg9gJgpgziAXAbVABwnAlgFyxMJZARgBoA2AXVJADcBDAGwFcYkQAKAaVNoHJeAShABfUuky58hFAAZSAVmp0mrdhwAyAfXkB3AbuFiJ2PASLyALMoYs2iTtrL7Sh0eJAZT0i6QDMNqr2jloATK4GRh5eUuZypKGBdurafhGuUSaxMsjyxElqDppaluluxp6SZjnysgXBmgB6AFRlmZXeccjhdTS2hZwAYi2kzALM7THVROH5fUHqgyWj45NVPijhifPJRUtpY6NrnTnhATsDHEvhhxPuWdObpNYXDUtkt8fZM4r1izorI6iZQwKAAc3gRFAADMAE4QAC2SHkIBwECQZBAAAsYPQoOxIGA2BU4YiMTQ0UhwtjcfiHITiR5SUjENTKYg0jS8QSCIyYfCWZz2aUuXTwLz7iBmUgReyFDQcdz6RKSQKkPLUejEOQaIwsET2FAIDgcKCQAraTyDaqyYgABwUrUAdl1+uCRpNZotSvF1qZasQAE5HUgHaKrXypQHiCj2cHw8q-fzbcRMeyY96xQzJdLEMQ2VrU5mIzmA7HC9TFVmVf6UyG82n6FhGOwsRAIABrc0qXYgAA6ffwOHoOlLKbTWqFTZbDjbne7-WCA6H9D4WmIY8F9fjw+brfbXde7GXxpHoU3SB3Wo1u5n2IPC4WDhPw7Xfgv2vrLp7AwHAGMCDBD8b2dClp33edXQNBwPVNfEREoEQgA
			\begin{tikzcd}
				{(L^*,w,\ldots)} \arrow[rr, "\cong"]                                  &                                                    & {(F^*,u,\ldots)}                                                  \\
				\ {\phantom{(F_4,u,\ldots)}} &  & \ {\phantom{(F_4,u,\ldots)}} \\
				\ & & {\ {\phantom{(F_4,u,\ldots)}}} \arrow[llu,dotted,hook]\\
				{(L_4,w,\ldots)} \arrow[uu, no head, dotted] \arrow[rru, dotted, hook] &                                                    & {(F_4,u,\ldots)} \arrow[uu, no head, dotted]                       \\
				{(L_3,w,\ldots)} \arrow[u, no head]                                   &                                                    & {(F_3,u,\ldots)} \arrow[u, no head] \arrow[llu, "\iota'_3", hook] \\
				{(L_2,w,\ldots)} \arrow[u, no head] \arrow[rru, "\iota_2", hook]      &                                                    & {(F_2,u,\ldots)} \arrow[u, no head]                               \\
				{(L_1,w,\ldots)} \arrow[u, no head]                                   &                                                    & {(F_1,u,\ldots)} \arrow[u, no head] \arrow[llu, "\iota'_1", hook] \\
				{(L_0,w,\ldots)} \arrow[u, no head] \arrow[rru, "\iota_0", hook]      &                                                    & {(F_0,u,\ldots)} \arrow[u, no head]                               \\
				& {(K,v,\ldots)} \arrow[lu, no head] \arrow[ru, no head] &                                                              
			\end{tikzcd}
			\caption{Back and forth construction for the proof of \cref{lem:resplendent-relative-subcompleteness}}
			\label{fig:back-and-forth-drawing}
		\end{figure}
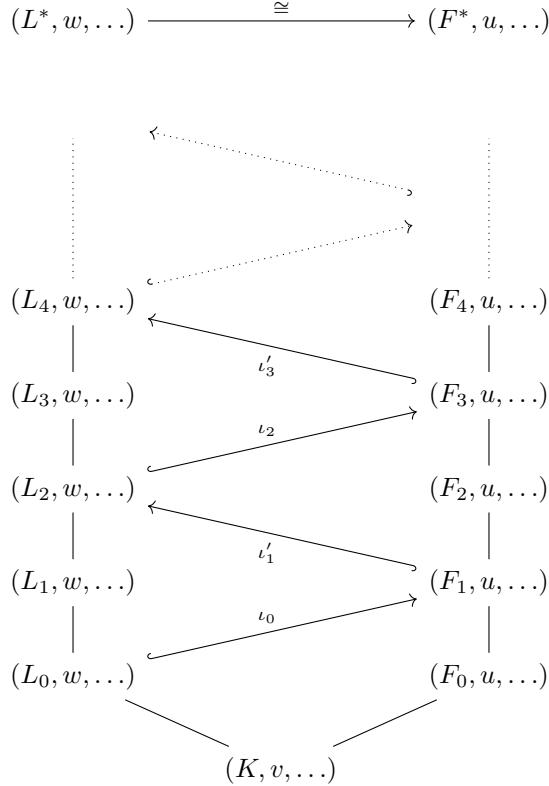
		
		Then, as in \cite[Proof of Lemma~6.1]{kuhlmann2016algebra}, we take the (set-theoretic) union of all the \(\iota_i\) (for \(i\) even) and obtain an \(\L\)-isomorphism over \((K,v,\ldots)\) between \((L^\ast,w^\ast,\ldots)\) and \((F^\ast,u^\ast,\ldots)\), which are elementary extensions of \((L,w,\ldots)\) and \((F,u,\ldots)\), respectively.
		Thus, \((L,w,\ldots)\equiv_K(F,u,\ldots)\) in \(\L\).
	\end{proof}
	
	As a corollary, we get resplendent \AKEeq{} for tame fields of equal characteristic.
	
	\begin{kor}
		Let \(\C\) be an elementary class of defectless fields of \textbf{equal} characteristic that has the relative embedding property. 
		Then \(\C\) has resplendent \AKEeq{}.
		In particular, \(\C\) has the \CAKEeq{}-property.
	\end{kor}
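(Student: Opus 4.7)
The plan is to deduce resplendent \AKEeq{} from \cref{lem:resplendent-relative-subcompleteness} by supplying a canonical common substructure, and then the \CAKEeq-property follows immediately from \cref{prop:cake-from-rake}. Given a \(\{\sort{\Gamma}, \sort{k}\}\)-enrichment \(\L\) of \(\Lgammak\) and two \(\L\)-structures \((L, w, \ldots)\), \((F, u, \ldots)\) whose \(\Lgammak\)-reducts lie in \(\C\) and whose residue field and value group enrichments are elementarily equivalent, I would invoke the lemma with the prime field \(P\) (either \(\Q\) or \(\F_p\)) equipped with its trivial valuation \(v_0\) as the common substructure. Since the residue fields \(Lw\) and \(Fu\) share the characteristic of \(P\), the restrictions of \(w\) and \(u\) to \(P\) are forced to be trivial, so \((P, v_0)\) embeds as a valued subfield of both \((L, w)\) and \((F, u)\).

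The subtle point is promoting \((P, v_0)\) to a common substructure in the enriched language \(\L\), since the enrichment may introduce new symbols on the value group and residue field sorts. I would exploit the fact that every element of \(P\) is \(\emptyset\)-definable in \(\Lring\) and that \(0\) is \(\emptyset\)-definable in \(\Loag\): the \(\emptyset\)-generated \(\L|_\sort{k}\)-substructure of \(Lw\) is canonically isomorphic, via the hypothesis \((Lw, \ldots) \equiv (Fu, \ldots)\), to its counterpart in \(Fu\), and analogously on the value group side. After these canonical identifications, one obtains a genuine common \(\L\)-substructure whose valued subfield part is \((P, v_0)\).

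The remaining hypotheses of \cref{lem:resplendent-relative-subcompleteness} then fall into place: the trivial valuation on \(P\) is defectless; the value group \(v_0 P = \{0\}\) makes the quotient \(wL/\{0\} = wL\) torsion-free; and \(Lw | P\) is separable because \(P\) is perfect (in characteristic \(p\), \(\F_p\)-linear independence is preserved by Frobenius). The lemma yields \((L, w, \ldots) \equiv (F, u, \ldots)\) in \(\L\), establishing resplendent \AKEeq{}, and \cref{prop:cake-from-rake} then supplies the \CAKEeq-property. The main technical subtlety is the enrichment bookkeeping in the second paragraph; the rest parallels the familiar derivation of \AKEeq{} from relative subcompleteness in the non-enriched setting.
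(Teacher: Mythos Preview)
Your proposal is correct and matches the argument the paper leaves implicit: apply \cref{lem:resplendent-relative-subcompleteness} with the trivially valued prime field as the common substructure (using equal characteristic to force triviality, perfectness of the prime field for separability, and torsion-freeness of ordered abelian groups), then invoke \cref{prop:cake-from-rake} for \CAKEeq{}. The enrichment bookkeeping you flag---identifying the \(\emptyset\)-generated \(\L|_\sort{k}\)- and \(\L|_\sort{\Gamma}\)-substructures via the assumed elementary equivalences---is the only point the paper does not spell out, and your treatment of it is adequate.
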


	\section{Counterexamples in mixed characteristic}
	
	\subsection{Lifting automorphisms}
	
	To highlight what can go wrong, we start with a proof sketch of \CAKEeq{} that has a \textbf{missing step}, which will serve as a motivation for counterexamples.
	
	\begin{rem}[Apparent proof of \CAKEeq{}]
		\label{rem:wrong-proof-CAKE}
		Let \((K,v)\) and \((L,w)\) be valued fields with decompositions \(v=\barv\circ v'\) and \(w=\barw\circ w'\).
		Assume that \((K,v')\equiv (L,w')\) and \((Kv',\barv)\equiv (Lw',\barw)\) in \(\Lval\). We want to prove that \((K,v)\equiv(L,w)\).
		
		By the Keisler--Shelah isomorphism theorem \cite[Theorem~2.5.36]{marker2006model} there is an index set \(I\) and an ultrafilter \(\U\) on \(I\) such that \((K,v')^\U\cong (L,w')^\U\) and \((Kv',\barv)^\U\cong (Lw',\barw)^\U\).
		One might think that this already implies that \((K,v)^\U\cong (L,w)^\U\) which would mean we are done.
		However, this is generally not the case since the isomorphisms do not need to be compatible (and we will see very concrete counterexamples later, see Examples \ref{ex:fully-tame-CAKE-counterexample} and \ref{ex:first-CAKE-counterex}).
		
		We can see what is going on after naming the isomorphisms:
		\[
		\psi\colon (K,v')^\U\overset{\cong}{\longrightarrow} (L,w')^\U
		\text{ and } 
		\phi\colon(Kv',\barv)^\U\overset{\cong}{\longrightarrow} (Lw',\barw)^\U.
		\]
		Now, \(\psi\) induces an isomorphism \(\overline{\psi}\) on the residue fields.
		Then \(\phi^{-1}\circ\overline{\psi}\) is an automorphism of \((Kv')^\U\).
		
		If we can somehow \textbf{lift this automorphism} to an automorphism \(\chi\) of the valued field \((K,v')^\U\), we are done:
		
		Then, \(\chi\circ \psi^{-1}\) is an isomorphism of valued fields inducing \((\phi^{-1}\circ\overline{\psi})\circ\overline{\psi}^{-1}=\phi^{-1}\).
		Since \(\phi^{-1}\colon(Lw',\barw)^\U \overset{\cong}{\longrightarrow} (Kv',\barv)^\U\) was also an isomorphism between the valued fields induced by \(w\) and \(v\), we have that \(\chi\circ \psi^{-1}\) is even an isomorphism between \((L,w)^\U\) and \((K,v)^\U\).
		In particular, we have \((K,v)\equiv(L,w)\) as valued fields.
	\end{rem}
	
	\begin{rem}
		\label{rem:stable-embeddedness-is-a-trap}
		Note that even though \cite[Appendix, Lemma~1(6)]{chatzidakis1999model} might suggest otherwise, mere stable embeddedness of the residue field as a pure field is in general not enough to be able to lift automorphisms of the residue field to valued field automorphisms.
		For this, one would need to additionally assume that \(0\)-definable subsets of the residue field are traces of \(0\)-definable subsets of the valued field (which is the case in \cite[Appendix]{chatzidakis1999model}).
		This last property is also referred to as \emph{canonical embeddedness} of the residue field, which in combination with stable embeddedness amounts to \emph{full embeddedness}, see \cite[Definition~2.1.9]{cherlin-hrushovski2003finite}.
		
		This is also the case in \cite[Lemma~6.5]{anscombe2024characterizing}: 
		In their proof, stable embeddedness is not enough to lift the isomorphism of the residue fields.
		Instead one needs full embeddedness of the residue field in the sense of \cite[Definition~2.1.9]{cherlin-hrushovski2003finite}.
		In \cref{ex:fully-tame-CAKE-counterexample} we will see a tame mixed-characteristic counterexample to \CAKEeq{}, even though the residue field is stably embedded there (this is a consequence of the relative embedding property for tame fields, proved in \cite{kuhlmann2016algebra}, see  \cite[Lemma~3.1]{jahnke-simon2020nip}).
	\end{rem}

	\subsection{Counterexamples for \CAKEeq{}}
	
	We now want to look for counterexamples of \CAKEeq{}.
	With \cref{rem:wrong-proof-CAKE} in mind, we need to find examples where an automorphism of the residue field does not lift to an automorphism of the valued field.
	We start by restating a construction from \cite{anscombe-jahnke2022cohen}, then present a variation and construct counterexamples for \CAKEeq{} from there.
	
	\begin{constr}[a counterexample for lifting automorphisms, see {\cite[Example~11.5]{anscombe-jahnke2022cohen}}]
		\label{ex:hiding-example}
		Consider a perfect field \(k\) of characteristic \(p>2\) with elements \(\alpha_1,\alpha_2\in k\) such that 
		\begin{itemize}
			\item there is an automorphism \(\phi\) of \(k\) which maps \(\alpha_1\) to \(\alpha_2\), and
			\item \(\alpha_1\) and \(\alpha_2\) lie in different multiplicative cosets of \(k\x{}^2\).
		\end{itemize} 
		For example one can take \(k=\F_p(\alpha_1,\alpha_2)^\mathrm{perf}\) with \(\alpha_1,\alpha_2\) algebraically independent over \(\F_p\). We will also present more elaborate examples later.
		
		Let \((W(k),v_p)\) be the fraction field of the ring of Witt vectors over the (perfect) field \(k\) together with the Witt valuation \(v_p\).
		Let \(a_1\in \O_{v_p}\) be a representative of \(\alpha_1\) with respect to \(\res_{v_p}\), i.e. \(\res_{v_p}(a_1)=\alpha_1\).
		Take \(K\coloneqq W(k)(\sqrt{pa_1})\) and denote the (unique) extension of the Witt valuation again by \(v_p\).  
		Note that \((K|W(k),v_p)\) is purely ramified, so \(Kv_p=k\).
		
		Then, the automorphism \(\phi\) of \(k\) sending \(\alpha_1\) to \(\alpha_2\) does not lift to an automorphism of \((K,v_p)\).
	\end{constr}
	
	There is a variant of this construction where the valued field is not finitely ramified but tame of mixed characteristic.
	
	\begin{constr}[Kartas, tame version of {\cite[Example~11.5]{anscombe-jahnke2022cohen}}]
		\label{ex:tame-hiding}
		Consider a perfect field \(k\) with elements \(\alpha_1,\alpha_2\in k\) and a field automorphism \(\phi\) as in \cref{ex:hiding-example}.
		Let \((W(k),v_p)\) and \(a_1\in \O_{v_p}\) be as in \cref{ex:hiding-example}.
		Take \((K,v)\) to be an algebraically maximal immediate extension of
		\((W(k)(\sqrt{pa_1})(p^{1/p^\infty}),v_p)\)
		where \(v_p\) denotes the (unique) extension of the Witt valuation.
		Note that, again, \((W(k)(\sqrt{pa_1})(p^{1/p^\infty})|W(k),v_p)\) is purely ramified, and thus \(Kv=k\).
		Moreover, note that \((K,v)\) is tame: it is algebraically maximal, \(Kv=k\) is perfect and \(vK\) is \(p\)-divisible.
		
		The automorphism \(\phi\) of \(k\) sending \(\alpha_1\) to \(\alpha_2\) does not lift to an automorphism of \((K,v_p)\).
		The argument for this is exactly the same as in \cite[Example~11.5]{anscombe-jahnke2022cohen}.
		%There is \(b_1\in K\) such that \(b_1^2=pa_1\).
		%Now, suppose there is an automorphism \(\Phi\) of \((K,v_p)\), lifting \(\phi\). Then
		%\[
		%\res\bigg(\frac{\Phi(b_1)^2}{p}\bigg) = \res(\Phi(a_1)) = \phi(\res(a_1)) =\alpha_2.
		%\]
		%Thus, for \(b_2\coloneqq \Phi(b_1)\in K\), we have \(b_2^2=pa_2\), where \(a_2\) is some representative of \(\alpha_2\). 
		%But this \(b_2\) cannot exist in \(K\).
	\end{constr}
	
	Both constructions now indeed yield counterexamples for \CAKEeq{} once we have a suitable valuation on \(k\):
	
	\begin{lem}
		\label{lem:compAKE-ctex}
		Let \(k\) be a perfect field of characteristic \(p>2\) and let \(\alpha_1,\alpha_2\in k\) such that
		\begin{itemize}
			\item there is an automorphism \(\phi\) of \(k\) sending \(\alpha_1\) to \(\alpha_2\), and
			\item \(\alpha_1\) and \(\alpha_2\) lie in different multiplicative cosets of \(k\x{}^2\). 
		\end{itemize}
		
		Let \(W(k)\) be the fraction field of the ring of Witt vectors over \(k\) and let \(a_1\in \O_{v_p}\) be a lift of \(\alpha_1\), i.e. \(\res_v(a_1)=\alpha_1\).
		Take \((K,v)\) to be 
		\begin{enumerate}[label=\normalfont(\alph*)]
			\item \(W(k)(\sqrt{pa_1})\) with the (unique) extension of the Witt vector valuation, or
			\item an algebraically maximal immediate extension of \((W(k)(\sqrt{pa_1})(p^{1/p^\infty}),v_p)\), where \(v_p\) denotes the (unique) extension of the Witt vector valuation.
		\end{enumerate} 
		
		Let \(\nu_1\) be a valuation of \(k\) such that 
		\begin{itemize}
			\item \(\nu_1(\alpha_1)\) is not divisible by \(2\) in \(\nu_1 k\), and 
			\item \(\nu_1(\alpha_2)=0\).
		\end{itemize}
		Let \(\nu_2=\nu_1\circ\phi\).
		
		Then, \((K,v)=(K,v)\) and \((k,\nu_1)\cong (k,\nu_2)\), but \((K,\nu_1\circ v)\not\equiv (K,\nu_2\circ v)\).
	\end{lem}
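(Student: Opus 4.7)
The first claim $(k,\nu_1)\cong(k,\nu_2)$ is immediate from the definition $\nu_2=\nu_1\circ\phi$: for every $x\in k$ we have $\nu_1(\phi(x))=\nu_2(x)$, so $\phi\colon k\to k$ itself realises an isomorphism of valued fields $(k,\nu_2)\overset{\cong}{\longrightarrow}(k,\nu_1)$. The substantive content is therefore the failure of elementary equivalence of the two composed valued fields.

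To establish $(K,\nu_1\circ v)\not\equiv(K,\nu_2\circ v)$, my plan is to produce a distinguishing $\Lval$-sentence. The candidate is
\[
\varphi \;\equiv\; \exists y\,\exists z\,\exists z' \bigl(z\neq 0 \wedge y^2 = pz \wedge z\in\O \wedge zz'=1 \wedge z'\in\O\bigr),
\]
i.e.\ ``$p$ equals a square divided by a unit of the composite valuation ring''. The heuristic is that $pa_1=(\sqrt{pa_1})^2$ is a square in $K$ by construction, while the status of $a_1$ as a unit of the composed valuation switches between $\nu_1\circ v$ and $\nu_2\circ v$.

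The easy direction, $(K,\nu_2\circ v)\models\varphi$, uses $y=\sqrt{pa_1}$ and $z=a_1$: one has $y^2=pz$ by construction and
\[
(\nu_2\circ v)(a_1) \;=\; \nu_2(\alpha_1) \;=\; \nu_1(\phi(\alpha_1)) \;=\; \nu_1(\alpha_2) \;=\; 0,
\]
so $a_1$ is a unit of $\nu_2\circ v$. The core step is $(K,\nu_1\circ v)\not\models\varphi$. Suppose $y,z\in K^\times$ witness $\varphi$ there; since $(\nu_1\circ v)(z)=0$ forces $v(z)=0$, we get $v(y^2)=v(p)=1$. In both cases (a) and (b) the value group $vK$ is a torsion-free subgroup of $\Q$ --- explicitly $\tfrac{1}{2}\Z$ in case (a), and $\tfrac{1}{2}\Z[1/p]$ in case (b), using that an algebraically maximal immediate extension does not change the value group. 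Hence $v(y)=\tfrac{1}{2}=v(\sqrt{pa_1})$ is the unique solution of $2v(y)=1$, so $y=u\sqrt{pa_1}$ for some $u\in\O_v^\times$ and therefore $z=u^2a_1$. Computing inside the convex subgroup $\nu_1k$ of $(\nu_1\circ v)K$,
\[
(\nu_1\circ v)(z) \;=\; 2\,\nu_1(\res_v u) + \nu_1(\alpha_1),
\]
and vanishing of the left-hand side would place $\nu_1(\alpha_1)\in 2\nu_1k$, contradicting the hypothesis.

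The point deserving care is the step pinning down $v(y)=\tfrac{1}{2}$: this is trivial in case (a) from $vK=\tfrac{1}{2}\Z$, but in case (b) it uses that the algebraically maximal immediate extension preserves the value group, so that $vK$ remains a torsion-free subgroup of $\Q$. Once $y$ is forced to be a $v$-unit multiple of $\sqrt{pa_1}$, the remainder reduces to the $2$-divisibility computation in $\nu_1k$ enabled by the hypotheses on $\nu_1(\alpha_1)$ and $\nu_1(\alpha_2)$.
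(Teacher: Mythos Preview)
Your proof is correct and follows essentially the same approach as the paper: both use the same distinguishing sentence (that $p$ equals a square times a unit of the composed valuation), verify it in $(K,\nu_2\circ v)$ via $y=\sqrt{pa_1}$, $z=a_1$, and show it fails in $(K,\nu_1\circ v)$ because $\nu_1(\alpha_1)\notin 2\nu_1 k$. The only cosmetic difference is in the failure direction: the paper argues abstractly that $v_1(p)\notin 2v_1K$ using purity of the convex subgroup $\nu_1 k\leq v_1K$, whereas you compute the concrete value group $vK\subseteq\Q$ to pin down $v(y)=\tfrac12$ and then unwind explicitly.
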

	\begin{proof}
		Let \(v_1=\nu_1\circ v\) and \(v_2=\nu_2\circ v\).
		We show that \((v_1K,v_1(p))\not\equiv (v_2K,v_2(p))\), which implies \((K,\nu_1\circ v)\not\equiv (K,\nu_2\circ v)\).
		We will argue that \(v_2(p)\) is divisble by \(2\) in \(v_2K\), while \(v_1(p)\in v_1K\) is not.
		
		Indeed, \(pa_1\) is a square in \(K\) and \(v_2(a_1)=\nu_2(\alpha_1)=\nu_1(\phi(\alpha_1))=\nu_1(\alpha_2)=0\), but \(v_1(a_1)=\nu_1(\res_v(a_1))=\nu_1(\alpha_1)\) is not divisible by \(2\) (note that \(\nu_1k\subseteq v_1K\) is a convex subgroup).
		Thus, \(v_2(p)=v_2(pa_1)\) is divisible by \(2\), while \(v_1(p)=v_1(pa_1)-v_1(a_1)\) is not.
		Hence \((v_1K,v_1(p))\not\equiv (v_2K,v_2(p))\).
		
		Thus, the \(\Lval\)-sentence 
		\[
		\exists X (\exists Y (Y^2=p\cdot X)\wedge v(X)=0)
		\]
		holds true in \((K,v_2)\) while it does not hold in \((K,v_1)\). 
	\end{proof}
	
	As we have already noted, for both examples we can take \(k=\F_p(\alpha_1,\alpha_2)^\mathrm{perf}\) with \(\alpha_1,\alpha_2\) algebraically independent over \(\F_p\).
	This will supply us with the following concrete counterexample for \CAKEeq{}. 
	Note that in this example the valuations on \(k\) are not henselian.
	\begin{examp}
		\label{ex:first-CAKE-counterex}
		Let \(k=\F_p(\alpha_1,\alpha_2)^\mathrm{perf}\), \(p>2\) prime with \(\alpha_1,\alpha_2\) algebraically independent over \(\F_p\).
		Denote \(t\coloneqq \alpha_1\) and \(s\coloneqq \alpha_2\), so \(k=\F_p(t,s)^\mathrm{perf}\).
		
		Let \(v_t\) be the \(t\)-adic valuation and let \(v_s\) be the \(s\)-adic valuation on \(\F_p(t,s)^\mathrm{perf}\). 
		Consider \(W(k)\), the fraction field of the ring of Witt vectors over \(k\) together with the Witt valuation \(v_p\).
		Take \(\tau\in W(k)\) such that \(\res_{v_p}(\tau)=t\).
		Let \((K,v)\coloneqq (W(k)(\sqrt{p\tau}),v_p)\).
		
		Then \((K,v)=(K,v)\) and \((k,v_s)\cong (k,v_t)\), but \((K,v_s\circ v)\not \equiv (K,v_t\circ v)\).
	\end{examp}
	
	Finally, with a little more care with regard to the valuations on \(k\) (namely we want them to be tame), we can cook up a counterexample for \AKEeq{} in mixed characteristic tame valued fields with the \textbf{same underlying field}.
	Note that, because of the F. K. Schmidt theorem, 
	it is impossible to find two independent henselian valuations on a field that is not separably closed.
	Thus, if we try to make both valuations in the example \((\F_p(t,s)^{\mathrm{perf}},v_t,v_s)\) from before henselian, we will necessarily end up with a separably closed field, and then \(s\) and \(t\) cannot be in different multiplicative cosets of the squares anymore (\(p\neq 2\)). 
	
	Instead, we construct a counterexamples where the valuations are comparable:

	\begin{constr}[tame instance of \(k\) in {\cite[Example~11.5]{anscombe-jahnke2022cohen}}]
		Let \(\Gamma\coloneqq \bigoplus_\Z \frac{1}{p^\infty}\Z\) be the lexicographic sum
		and let \(k\coloneqq \F_p(\!(\Gamma)\!)\) with the Hahn series valuation \(\nu\), \(p>2\) prime. 
		
		The valuation \(\nu\) is tame:
		\begin{itemize}
			\item the Hahn series valuation is maximal, in particular algebraically maximal,
			\item the residue field \(\F_p\) is perfect, and
			\item the value group \(\Gamma=\bigoplus_\Z \frac{1}{p^\infty}\Z\) is \(p\)-divisible.
		\end{itemize}
		
		For \(i\in \Z\), let \(e_i\coloneqq (\delta_{ij})_{j\in\Z}\in \Gamma\), i.e., \(e_i\) is the sequence that is \(0\) everywhere but in the \(i\)-th spot, where it is \(1\).
		Take \(\alpha_1\coloneqq t^{e_1}\in k\) and \(\alpha_2\coloneqq t^{e_2}\in k\).
		
		Clearly, the map \(t^{e_i}\mapsto t^{e_{i+1}}\) extends to a field automorphism of \(k\) that maps \(\alpha_1=t^{e_1}\) to \(\alpha_2=t^{e_2}\).
		We have \(\nu(\alpha_1)=e_1>e_2=\nu(\alpha_2)\) and \(e_1\) and \(e_2\) are in different archimedean classes of \(\Gamma\).
		Moreover, \(\nu(\alpha_1/\alpha_2)=e_1-e_2\) is not divisible by \(2\) in \(\Gamma\), 
		so \(\alpha_1/\alpha_2\) cannot be a square in \(k\). 
		Hence, \(\alpha_1\) and \(\alpha_2\) lie in different multiplicative cosets of the squares \(k^{\times2}\).
		
		Let \(\nu_1\) be the coarsening corresponding to quotienting out \(\Delta_1\), the biggest convex subgroup of \(\Gamma\) not containing \(e_1\). 
		This is the coarsest coarsening of \(\nu\) such that \(\alpha_1\) has strictly positive valuation.
		
		As \(e_2<e_1\) and \(e_1\) and \(e_2\) are in different archimedean classes, we have that \(\nu(\alpha_2)=e_2\in \Delta_1\), so \(\nu_1(\alpha_2)=0\). 
		Moreover, \(\nu_1(\alpha_1)=(\ldots,0,0,0,1)\in \bigoplus_{\omega^\ast}\frac{1}{p^\infty}\Z\) is not divisible by \(2\) (note \(p\neq 2\)). 
	\end{constr}
	
	To summarize, we found a witness for the following:
	
	\begin{kor}
		\label{ex:tame-Hahn-hiding-residue-field}
		There is a field \(k\) of characteristic \(p>2\) with a tame valuation \(\nu\) and elements \(\alpha_1,\alpha_2\in k\) such that
		\begin{enumerate}
			\item there is an automorphism \(\phi\) of \(k\) which maps \(\alpha_1\) to \(\alpha_2\), 
			\item \(\alpha_1\) and \(\alpha_2\) lie in different multiplicative cosets of \(k\x{}^2\), and 
			\item \(\nu(\alpha_1)\gg\nu(\alpha_2)\), i.e. \(\nu(\alpha_1)>\nu(\alpha_2)\) and they lie in different archimedean classes of \(vK\).
			\item there is a coarsening \(\nu_1\) of \(\nu\) such that \(\nu_1(\alpha_1)\) is not divisible by \(2\) and \(\nu_1(\alpha_2)=0\).
		\end{enumerate}
	\end{kor}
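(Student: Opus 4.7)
The plan is to verify that the Hahn series construction immediately preceding this corollary witnesses all four properties; much of the work has already been carried out while setting up the construction, so only a clean packaging is needed. I would take $\Gamma \coloneqq \bigoplus_\Z \frac{1}{p^\infty}\Z$ with lexicographic order, $k \coloneqq \F_p(\!(\Gamma)\!)$ with its Hahn series valuation $\nu$, and $\alpha_i \coloneqq t^{e_i}$. Tameness of $\nu$ falls out of three standard observations: Hahn series fields are maximally valued and in particular algebraically maximal, the residue field $\F_p$ is perfect, and $\Gamma$ is $p$-divisible since each summand is. For (1), the shift $e_i \mapsto e_{i+1}$ is an order-preserving group automorphism of $\Gamma$ and lifts canonically to an $\F_p$-algebra automorphism $\phi$ of $\F_p(\!(\Gamma)\!)$ by relabeling exponents in each Hahn series, and by construction $\phi(\alpha_1) = \alpha_2$.

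For (2), I would observe that $\alpha_1/\alpha_2 = t^{e_1-e_2}$ has valuation $e_1 - e_2 \notin 2\Gamma$, since the coordinate of $e_1 - e_2$ at index $1$ equals $1$, and $1$ is not divisible by $2$ in $\frac{1}{p^\infty}\Z$ (here I use $p > 2$); as squares in $k$ have valuation in $2\Gamma$, this settles (2). Item (3) follows directly from the lex order on $\bigoplus_\Z$: the summands at distinct indices yield distinct archimedean classes of $\Gamma$, and with the convention that $e_1 > e_2$, no finite multiple of $e_2$ exceeds $e_1$. For (4), I would take $\Delta_1$ to be the biggest convex subgroup of $\Gamma$ not containing $e_1$; then $e_2 \in \Delta_1$ by (3), so $\nu_1(\alpha_2) = 0$, while $\nu_1(\alpha_1)$ is the image of $e_1$ in $\Gamma/\Delta_1$, whose leading coordinate is $1$ and hence not divisible by $2$.

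The only real obstacle is bookkeeping with the order convention on the direct sum (so that simultaneously $e_1 > e_2$ in lex order and $e_2 \in \Delta_1$). Once one fixes the convention that larger indices correspond to smaller archimedean classes --- which is implicit in the preceding construction and in the remark that $\nu_1(\alpha_1) \in \bigoplus_{\omega^*}\frac{1}{p^\infty}\Z$ --- the four checks above become routine and the corollary follows.
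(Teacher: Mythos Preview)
Your proposal is correct and follows exactly the paper's approach: the corollary is stated as a summary of the preceding Hahn series construction, and your write-up verifies precisely the same properties (tameness via maximality, perfect residue field, and $p$-divisibility; the shift automorphism; the square-class argument via $e_1-e_2\notin 2\Gamma$; the archimedean-class separation; and the coarsening by $\Delta_1$) in the same way. Your remark about the order convention is apt but, as you note, purely bookkeeping and already implicit in the paper's convention that $\Gamma/\Delta_1\cong\bigoplus_{\omega^*}\frac{1}{p^\infty}\Z$.
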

	
	In particular, \(\nu_1\) is a tame valuation on \(k\) satisfying the conditions of \cref{lem:compAKE-ctex}.
	Hence, we get the following example, where now \textbf{all} valuations are tame.
	
	\begin{examp}
		\label{ex:fully-tame-CAKE-counterexample}
		Take \(p>2\) prime. 
		Let \(\Gamma\coloneqq \bigoplus_\Z \frac{1}{p^\infty}\Z\) and let \(k\coloneqq \F_p(\!(\Gamma)\!)\) with the Hahn series valuation \(\nu\).
		For \(i\in \Z\) let \(e_i\coloneqq (\delta_{ij})_{j\in\Z} \in \Gamma\), i.e. \(e_i\) is the sequence that is \(0\) everywhere but in the \(i\)-th spot, where it is \(1\).
		
		For \(i=1,2\), let \(\nu_i\) be the coarsening corresponding to quotienting out \(\Delta_i\), the biggest convex subgroup of \(\Gamma\) not containing \(e_i\).
		This is the coarsest coarsening of \(\nu\) such that \(\alpha_i=t^{e_i}\) has strictly positive valuation.
		Note that \(\nu_1\) and \(\nu_2\) are tame valuations.
		
		We have that \((k,\nu_1)\cong (k,\nu_2)\) via the automorphism given by \(t^{e_i}\mapsto t^{e_{i+1}}\).
		
		Take \((K,v)\) as in \cref{ex:tame-hiding}, meaning it is an algebraically maximal immediate extension of \((W(k)(\sqrt{pa_1})(p^{1/p^\infty}), v_p)\), with \(a_1 \in W(k)\) lifting \(\alpha_1\). 
		Note that \((K,v)\) is tame.
		This now provides a counterexample to \CAKEeq{} by \cref{lem:compAKE-ctex}.
	\end{examp}
	
	\subsection{Tame counterexamples to \AKEeq{} with the same underlying field}
	
	As a consequence we can now obtain new counterexamples to \AKEeq{} with the same underlying field.
	For this, we need to recall how residue field and value group behave in compositions of valuations. Let \(K\) be a field and \(v=\barv\circ v'\) be a valuation on \(K\) with given decomposition. 
	\begin{itemize}
		\item The residue field of \((K,v)\) is \(Kv=(Kv')\barv\), the same as the residue field of the induced valuation \(\barv\).
		\item The value group \(\barv(Kv')\) of the induced valuation \(\barv\) is a convex subgroup of the value group \(vK\) of \((K,v)\) and the quotient is \(v'K=vK/\barv(Kv')\), the value group of the coarsening \(v'\). We have a short exact sequence of ordered abelian groups
		\[
		0 \to \barv(Kv') \to vK \to v'K \to 0.
		\]
	\end{itemize}
	It is easier to understand the value group \(vK\) in terms of \(v'K\) and \(\barv(Kv')\) when the short exact sequence splits.
	This can be achieved by passing to the saturated setting.
	
	We recall the following well known fact about ordered abelian groups, see e.g. \cite[Exercise~2.33]{marker2018model}. 
	It follows from \cite[Corollary~3.3.38]{aschenbrenner-dries-hoeven2017asymptotic}, as convex subgroups of ordered abelian groups are always pure.
	
	\begin{fact}
		\label{fact:split-exact-sequence-sat}
		Let \(\Gamma\) be an ordered abelian group and let \(\Delta\leq \Gamma\) be a convex subgroup.
		Moving, if needed, to an \(\aleph_1\)-saturated extension \((\Gamma^\ast,\Delta^\ast)\succeq (\Gamma,\Delta)\), the short exact sequence
		\[
		0\to \Delta^\ast \to \Gamma^\ast \to \Gamma^\ast/\Delta^\ast \to 0
		\]
		splits; that is, \(\Gamma^\ast\cong \Gamma^\ast/\Delta^\ast \oplus_\mathrm{lex} \Delta^\ast\).
	\end{fact}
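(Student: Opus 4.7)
The plan is to pass to an $\aleph_1$-saturated elementary extension $(\Gamma^\ast, \Delta^\ast) \succeq (\Gamma, \Delta)$ of the pair, use Zorn's lemma to produce a subgroup $H \leq \Gamma^\ast$ maximal with $H \cap \Delta^\ast = \{0\}$, and then show, using saturation-induced divisibility, that $H + \Delta^\ast = \Gamma^\ast$. Two preparatory observations drive the argument. First, any convex subgroup is pure in its ambient ordered abelian group: if $n\gamma \in \Delta$ with $\gamma > 0$, then $0 \leq \gamma \leq n\gamma$ forces $\gamma \in \Delta$ by convexity, so $n\Gamma \cap \Delta = n\Delta$. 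Second, every $\aleph_0$-saturated ordered abelian group is divisible: for each $x$ and each $n>0$ the single formula $ny = x$ is consistent over $x$ via the divisible hull and is realized by saturation. In particular $\Delta^\ast$ is divisible.

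The first substantive step is to check that a maximal $H$ as above is automatically pure in $\Gamma^\ast$. Suppose $ng \in H$ but $g \notin H$; by maximality $H + \Z g$ meets $\Delta^\ast$ nontrivially, so $mg + h' = \delta \in \Delta^\ast \setminus \{0\}$ for some $m \in \Z$, $h' \in H$. Multiplying by $n$ puts the left-hand side in $H$ (since $ng \in H$ and $H$ is a subgroup) and the right-hand side in $\Delta^\ast$, forcing $n\delta \in H \cap \Delta^\ast = \{0\}$, whence $\delta = 0$ by torsion-freeness, a contradiction. To then see $H + \Delta^\ast = \Gamma^\ast$, I would assume some $\gamma \in \Gamma^\ast \setminus (H + \Delta^\ast)$; maximality furnishes a relation $n\gamma + h = \delta \in \Delta^\ast \setminus \{0\}$ with $n > 0$, $h \in H$. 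Divisibility of $\Delta^\ast$ lets me write $\delta = n\delta''$ with $\delta'' \in \Delta^\ast$; then $n(\gamma - \delta'') = -h$ lies in $H \cap n\Gamma^\ast = nH$ by purity of $H$, so $-h = nh''$ with $h'' \in H$. Torsion-freeness of $\Gamma^\ast$ yields $\gamma = \delta'' + h'' \in \Delta^\ast + H$, contradicting the choice of $\gamma$.

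Finally, I would verify that the resulting abelian-group decomposition $\Gamma^\ast = H \oplus \Delta^\ast$ is a lex decomposition in the correct direction. Given any $h \in H$ with $h > 0$, one has $h \notin \Delta^\ast$, and convexity of $\Delta^\ast$ forces $h > \delta$ for every $\delta \in \Delta^\ast$; so $H$ sits entirely above $\Delta^\ast$. Consequently the projection $\Gamma^\ast \to \Gamma^\ast/\Delta^\ast$ restricts to an order-preserving isomorphism on $H$, yielding $\Gamma^\ast \cong \Gamma^\ast/\Delta^\ast \oplus_\mathrm{lex} \Delta^\ast$. The main obstacle is the interplay between purity of $H$ and divisibility of $\Delta^\ast$ in the second step: without $\aleph_1$-saturation providing divisibility, the purity argument cannot be converted into an actual splitting.
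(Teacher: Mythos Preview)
Your argument contains a genuine error: the claim that every \(\aleph_0\)-saturated (or \(\aleph_1\)-saturated) ordered abelian group is divisible is false. Take \(\Gamma=\Z\); any saturated elementary extension \(\Gamma^\ast\) is a \(\Z\)-group, and \(\Gamma^\ast\models\neg\exists y\,(2y=1)\) because this is a first-order sentence true in \(\Z\). The formula \(ny=x\) is \emph{not} automatically consistent with \(\Th(\Gamma^\ast)\) over the parameter \(x\); the divisible hull is not an elementary extension. Consequently \(\Delta^\ast\) need not be divisible (e.g.\ \(\Gamma=\Z\oplus_{\mathrm{lex}}\Z\) with \(\Delta=\{0\}\times\Z\)), and the step ``divisibility of \(\Delta^\ast\) lets me write \(\delta=n\delta''\)'' collapses. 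Without it, purity of \(H\) alone does not force \(H+\Delta^\ast=\Gamma^\ast\): a pure subgroup with trivial intersection with \(\Delta^\ast\) can fail to be a complement.

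The paper does not argue directly but invokes \cite[Corollary~3.3.38]{aschenbrenner-dries-hoeven2017asymptotic} together with the observation (which you also make correctly) that convex subgroups are pure. The content of that citation is essentially that \(\aleph_1\)-saturated abelian groups are algebraically compact (pure-injective), so any pure short exact sequence with such a group in the left slot splits. Your preparatory observations about purity of convex subgroups, the purity of a maximal \(H\), and the verification of the lexicographic ordering are all correct and would combine cleanly with this fact; what is missing is precisely the replacement of the false divisibility claim by the pure-injectivity of \(\Delta^\ast\).
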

	
	As a consequence we learn about the behavior of the value groups in compositions of valuations.
	
	\begin{lem}
		\label{lem:comp-value-group}
		Let \((K,v)\) and \((L,w)\) be valued fields with decompositions \(v=\barv\circ v'\) and \(w=\barw\circ w'\).
		If \(v'K\equiv w'L\) and \(\barv(Kv')\equiv \barw(Lw')\), then \(vK\equiv wL\).
	\end{lem}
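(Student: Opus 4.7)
The plan is to reduce to the case where the two short exact sequences of ordered abelian groups split as lex sums, and then to show that lex sums respect elementary equivalence. Set $\Gamma \coloneqq vK$ with convex subgroup $\Delta \coloneqq \barv(Kv')$, and $\Omega \coloneqq wL$ with convex subgroup $\Xi \coloneqq \barw(Lw')$. Applying \cref{fact:split-exact-sequence-sat}, I pass to $\aleph_1$-saturated elementary extensions $(\Gamma^\ast,\Delta^\ast)\succeq(\Gamma,\Delta)$ and $(\Omega^\ast,\Xi^\ast)\succeq(\Omega,\Xi)$, formed in the language of ordered abelian groups with a unary predicate naming the convex subgroup, so that
\[
\Gamma^\ast\cong\Gamma^\ast/\Delta^\ast\oplus_\mathrm{lex}\Delta^\ast \quad \text{and} \quad \Omega^\ast\cong\Omega^\ast/\Xi^\ast\oplus_\mathrm{lex}\Xi^\ast.
\]
Since the quotient is interpretable in the pair, elementarity of the extensions together with the hypotheses $v'K\equiv w'L$ and $\barv(Kv')\equiv\barw(Lw')$ yield $\Gamma^\ast/\Delta^\ast\equiv\Omega^\ast/\Xi^\ast$ and $\Delta^\ast\equiv\Xi^\ast$.

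It then suffices to prove: if $A_1\equiv A_2$ and $B_1\equiv B_2$ as ordered abelian groups, then $A_1\oplus_\mathrm{lex} B_1\equiv A_2\oplus_\mathrm{lex} B_2$. For this, I would apply the Keisler--Shelah isomorphism theorem \cite[Theorem~2.5.36]{marker2006model} to obtain a single ultrafilter $\U$ on some index set with $A_1^\U\cong A_2^\U$ and $B_1^\U\cong B_2^\U$. A direct coordinatewise computation shows that forming lex sums commutes with ultrapowers, $(A\oplus_\mathrm{lex} B)^\U = A^\U\oplus_\mathrm{lex} B^\U$: the lex order is defined by a disjunction of mutually exclusive conditions on the two coordinates, on which any ultrafilter picks out exactly one case. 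Combining the two isomorphisms coordinatewise then gives $(A_1\oplus_\mathrm{lex} B_1)^\U\cong(A_2\oplus_\mathrm{lex} B_2)^\U$, and hence $A_1\oplus_\mathrm{lex} B_1\equiv A_2\oplus_\mathrm{lex} B_2$.

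Chaining elementary equivalences then produces
\[
vK \equiv \Gamma^\ast \cong \Gamma^\ast/\Delta^\ast \oplus_\mathrm{lex} \Delta^\ast \equiv \Omega^\ast/\Xi^\ast \oplus_\mathrm{lex} \Xi^\ast \cong \Omega^\ast \equiv wL,
\]
as desired. The main obstacle is essentially bookkeeping: making sure the $\aleph_1$-saturated extension of the pair genuinely preserves each of the ambient group, the convex subgroup, and the quotient group (all of which are interpretable in the pair language), and verifying the compatibility of the lex sum construction with the ultrapower. Both points are routine once set up carefully, and with them in hand the argument assembles as above.
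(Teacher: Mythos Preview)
Your proposal is correct and follows essentially the same approach as the paper: both arguments combine \cref{fact:split-exact-sequence-sat} with the Keisler--Shelah isomorphism theorem to reduce to a comparison of lex sums. The only difference is the order of operations---the paper chooses a single ultrafilter that simultaneously witnesses the two elementary equivalences and makes the relevant pairs $\aleph_1$-saturated, whereas you first saturate to split and then invoke Keisler--Shelah separately on the pieces; this is purely organizational.
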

	\begin{proof}
		By the Keisler--Shelah isomorphism theorem  \cite[Theorem~2.5.36]{marker2006model} (possibly invoking it several times) there is an ultrafilter \(\U\) on some index set, such that we get isomorphisms \[(v'K)^\U\overset{\cong}{\longrightarrow} (w'L)^\U\text{ and }(\barv(Kv'))^\U\overset{\cong}{\longrightarrow} (\barw(Lw'))^\U.\]
		We may also assume that \((K,v)^\U\) and \((L,w)^\U\) are \(\aleph_1\)-saturated, thus by \cref{fact:split-exact-sequence-sat}, \[vK\equiv(vK)^\U\cong (v'K)^\U\oplus_\mathrm{lex} (\barv(Kv'))^\U \cong (w'L)^\U\oplus_\mathrm{lex} (\barw(Lw'))^\U \cong (wL)^\U\equiv wL.\]
	\end{proof}

	\begin{examp}
		\label{ex:no-mix-ake}
		Let \((k,\nu_1)\), \((k,\nu_2)\) and \((K,v)\) be as in \cref{ex:fully-tame-CAKE-counterexample}. 
		That is, they are all tame fields such that \(Kv=k\), \((k,\nu_1)\cong(k,\nu_2)\) and \((K,\nu_1\circ v)\not\equiv (K,\nu_2\circ v)\).
		Since \(\chara(k)>0\), the compositions \(v_1\coloneqq \nu_1\circ v\) and \(v_2\coloneqq \nu_2\circ v\) are also tame.
		We have \(Kv_1=k\nu_1\cong k\nu_2=Kv_2\) and as \(vK=vK\) and \(\nu_1k\cong \nu_2k\), it follows from \cref{lem:comp-value-group} that \(v_1K\equiv v_2K\).
		
		In summary, we found a field \(K\), bearing two tame valuations of mixed characteristic \(v_1\) and \(v_2\) such that
		\begin{itemize}
			\item \(Kv_1\cong Kv_2\),
			\item \(v_1K\equiv v_2K\), but
			\item \((K,v_1)\not\equiv (K,v_2)\).
		\end{itemize}
	\end{examp}
	
	\begin{rem}
		\label{rem:pointed-val-gp}
		As we observed in the proof of \cref{lem:compAKE-ctex}, which applied to all of our examples, the failure of \AKEeq{} can be explained by considering the pointed value groups: we have \((v_1K,v_1(p))\not\equiv (v_2K,v_2(p))\).
		
		Philip Dittmann constructed other examples, with elementarily equivalent pointed value groups and isomorphic algebraic parts, see \cref{ex:non-ake} in the appendix.
	\end{rem}

	\subsection*{Acknowledgments}
	I would like to thank Sylvy Anscombe, Blaise Boissonneau, Anna De Mase, Philip Dittmann, Franziska Jahnke and Alexander Ziegler for helpful discussions and remarks on previous versions of this manuscript.
	I extend my thanks to Konstantinos Kartas, who pointed out that \cite[Example~11.5]{anscombe-jahnke2022cohen} can be adapted to the tame setting, see \cref{ex:tame-hiding}.
	
	I am funded by the Deutsche Forschungsgemeinschaft (DFG, German Research Foundation) under Germany's Excellence Strategy – EXC 2044/2 – 390685587, Mathematics Münster: Dynamics–Geometry–Structure. 
	Part of this paper was written while I was a guest at the Hausdorff Research Institute for Mathematics (HIM) during the Trimester Program \enquote{Definability, decidability, and computability} funded by the Deutsche Forschungsgemeinschaft (DFG, German Research Foundation) under Germany‘s Excellence Strategy – EXC-2047/1 – 390685813.
	
	\appendix
	\section{Counterexamples to AKE principles for elementary equivalence in mixed characteristic ({\normalfont by} Philip Dittmann)}
	\label{appendix}
	
	While tame valued fields generally admit a good model-theoretic treatment, see \cite{kuhlmann2016algebra},
	it is known that the general Ax--Kochen--Ershov principle for elementary equivalence fails in mixed characteristic:
	that is, there are tame valued fields $(K,v)$ and $(L,w)$ of characteristic $0$, with residue fields $Kv$ and $Lw$ of characteristic $p>0$,
	such that we have $vK \equiv wL$ as ordered abelian groups, $Kv \equiv Lw$ as fields, but $(K,v) \not\equiv (L,w)$ as valued fields.
	
	Examples for this phenomenon are given in \cite[Theorem~1.5~b),~c)]{AnscombeKuhlmann_extremal-tame},
	as well as \cref{ex:no-mix-ake} of the main text of this article (where even $K \equiv L$ as fields without a valuation).
	However, in both of these cases, the failure of $(K,v)$ and $(L,w)$ to be elementarily equivalent as valued fields
	admits very simple explanations:
	Indeed, in \cite[Theorem~1.5~b),~c)]{AnscombeKuhlmann_extremal-tame}, $K$ and $L$ have non-isomorphic algebraic parts
	(i.e.\ they disagree on which one-variable polynomials over $\mathbb{Q}$ have roots),
	while in \cref{ex:no-mix-ake}, the pointed value groups $(vK, v(p))$ and $(wL, w(p))$ are not elementarily equivalent (in the language
	of ordered abelian groups with an added constant), see \cref{rem:pointed-val-gp}.
	
	The goal of this appendix is to give examples for failures of the Ax--Kochen--Ershov principle of a different flavour,
	by constructing tame mixed-characteristic valued fields $(K,v)$ and $(L,w)$ such that $(vK, v(p)) \equiv (wL, w(p))$,
	$Kv \equiv Lw$, $K$ and $L$ have isomorphic algebraic parts, but nonetheless $(K,v) \not\equiv (L,w)$.
	
	\begin{examp}[{After \cite[Example~2.4]{ADJ_finitely-ramified} for finitely ramified fields}]\label{ex:non-ake}
		Let $p$ be an odd prime.
		Let $(K_0, v_0)$ be a maximal purely wild algebraic extension of $\mathbb{Q}$ with the $p$-adic valuation,
		so $(K_0, v_0)$ is tame and we have $K_0 v_0 = \mathbb{F}_p$ and $v_0 K_0 = \frac{1}{p^\infty}\mathbb{Z}$
		\cite[Proposition~4.5~(i), Theorem~2.1~(i)]{KuhlmannPankRoquette}.
		First taking a Gauß extension of $(K_0, v_0)$ and then passing again to a maximal purely wild algebraic extension,
		we obtain a valued field $(K_1, v_1)$ which is tame and satisfies $K_1 v_1 = \mathbb{F}_p(t)^{\mathrm{perf}}$ and $v_1 K_1 = \frac{1}{p^\infty} \mathbb{Z}$.
		Note that $K_0$ is relatively algebraically closed in $K_1$, since any finite extension of $K_0$ must
		strictly extend the residue field or the value group by tameness of $(K_0,v_0)$,
		but $K_0 v_0$ is relatively algebraically closed in $K_1 v_1$ and $v_0 K_0 = v_1 K_1$.
		Let $s \in K_1$ be an element with residue $t$.
		Let $K = K_1(\sqrt{ps})$ and $v$ the unique extension of $v_1$ to $K$,
		and let $L = K_1(\sqrt{p(s^3+1)})$ and $w$ the unique extension of $v_1$ to $L$.
		Both $K$ and $L$ are ramified quadratic extensions of $K_1$ since $v_1(ps) = v_1(p(s^3+1)) = v_1(p) = 1 \in \frac{1}{p^\infty} \mathbb{Z}$ is not divisible by $2$,
		so $(vK, v(p)) = (\frac{1}{2p^\infty} \mathbb{Z}, 1) = (wL, w(p))$,
		and $Kv = K_1 v_1 = Lw$.
		
		We claim that $(K,v) \not\equiv (L,w)$.
		Let $(K',v')$ be $K(\sqrt{p})$ with the unique extension of the valuation $v$,
		and likewise let $(L',w')$ be $L(\sqrt{p})$ with the unique extension of the valuation $w$.
		Since $\sqrt{s} \in K'$ and $\sqrt{s^3+1} \in L'$, we have $\sqrt{t} \in K' v'$ and $\sqrt{t^3+1} \in L' w'$.
		Since both $(K',v')$ and $(L',w')$ are ramified degree four extensions of $(K_1,v_1)$,
		by the fundamental equality their residue field extensions must have degree $2$ over $K_0 v_0$,
		and so we must have $K' v' = \mathbb{F}_p(t)^{\mathrm{perf}}(\sqrt{t})$ and $L' w' = \mathbb{F}_p(t)^{\mathrm{perf}}(\sqrt{t^3+1})$.
		These residue fields are not elementarily equivalent as pure fields, since the equation $Y^2 = X^3 + 1$ has different numbers of solutions:
		in $L' w'$, we have the solution $(t, \sqrt{t^3+1})$,
		but in $K' v'$, all solutions have coordinates in $\mathbb{F}_p$ since $K' v'$ is a direct limit of rational function fields over $\mathbb{F}_p$
		and the algebraic curve described by $Y^2 = X^3 + 1$ has genus $1$,
		see \cite[Example~2.4]{ADJ_finitely-ramified}.
		
		Therefore $(K',v') \not\equiv (L',w')$.
		The valued field $(K',v')$ is interpretable in $(K,v)$,
		since $K' = K(\sqrt{p})$ and the valuation ring $\mathcal{O}_{v'}$ is the integral closure of $\mathcal{O}_v$ in $K'$,
		i.e.\ the set of elements in $K'$ which are roots of a polynomial $X^2 + aX + b$ with $a, b \in \mathcal{O}_v$.
		Since $(L',w')$ is interpretable in $(L,w)$ with an interpretation defined in the same way,
		we deduce $(K,v) \not\equiv (L,w)$.
		
		Lastly, we claim that both $K$ and $L$ have the same algebraic part, namely $K_0$.
		We give the argument for $K$.
		Recall that $K_0$ is relatively algebraically closed in $K_1$.
		If $K_0$ is not relatively algebraically closed in $K$, then $K$ contains a quadratic extension of $K_0$,
		and so $K'$ above contains a degree $4$ extension of $K_0$, necessarily of ramification index precisely $2$.
		Since $(K_0,v_0)$ is tame and therefore defectless,
		it follows that $K' v'$ contains a quadratic extension of $K_0 v_0 = \mathbb{F}_p$, but this is evidently not the case.
		The argument for $L$ is analogous.
	\end{examp}
	
	\begin{rem}\label{rem:ex-amk}
		With a suitable modification of the argument above, one can construct an example of a failure
		of the Ax--Kochen--Ershov principle for elementary equivalence where $(K,v)$ and $(L,w)$ are not merely tame
		but in fact algebraically maximal Kaplansky fields,
		i.e.\ additionally $Kv$ and $Lw$ have no finite extensions of degree divisible by $p$.
		The only difficulty here is finding a suitable candidate residue field $K_1 v_1$ of characteristic $p$
		which has two elementarily non-equivalent quadratic extensions with the same algebraic part (over the prime field $\mathbb{F}_p$).
		To achieve this, one can for instance take $K_1 v_1$ to be a perfect pseudo-algebraically closed field containing the algebraic closure of
		$\mathbb{F}_p$ with two quadratic extensions distinguishable by their absolute Galois groups.
		We omit the details.
		
		Note the class of algebraically maximal Kaplansky fields admits a good quantifier elimination result \cite[Theorem~2.6]{Kuhlmann_RV}
		(there phrased up to ``amc-structures of level 0'',
		which are equivalent to the also commonly used leading term sorts RV \cite[Definition~2.1]{Flenner}).
		In this sense, this class of valued fields is as well-behaved as the class
		of henselian valued fields of residue characteristic $0$.
		The failure of the Ax--Kochen--Ershov principle for elementary equivalence
		is essentially due to the availability of the ``extra constant'' $p$,
		which has no analogue in the case of residue characteristic $0$.
	\end{rem}
	
	\begin{rem}\label{rem:alt-explanation}
		There is a different way of explaining \cref{ex:non-ake},
		in line with the approach of \cite{ADJ_finitely-ramified},
		see in particular Remark~4.8~(2) there.
		For the valued field $(K,v)$ considered in the example,
		the set
		\[ \Omega_2(K,v) := \{ \alpha \in (Kv)^\times \colon \exists a \in \mathcal{O}_v \text{ s.t.\ $a^2/p$ lies in $\mathcal{O}_v$ and has residue $\alpha$} \} \]
		is the square class of $t$ in $Kv = \mathbb{F}_p(t)^{\mathrm{perf}} =: F$,
		whereas for $(L,w)$ the corresponding set $\Omega_2(L,w)$ is the square class of $t^3+1$.
		The enriched fields $(Kv, \Omega_2(K,v)) = (F, t (F^\times)^2)$ and $(Lw, \Omega_2(L,w)) = (F, (t^3+1)(F^\times)^2)$
		(in the language of rings with a unary predicate)
		are not elementarily equivalent,
		because the elementarily non-equivalent fields $K'v'$ and $L'w'$
		are interpretable in a straightforward way.
		
		This already shows that $(K,v) \not\equiv (L,w)$,
		since $(Kv, \Omega_2(K,v))$ and $(Lw, \Omega_2(L,w))$ are interpretable in the corresponding
		valued fields by the same formulas.
		Furthermore, we even obtain a statement for the RV sorts
		of $(K,v)$ and $(L,w)$,
		given as the monoids $K/(1+\mathfrak{m}_v)$ and $L/(1+\mathfrak{m}_w)$ in a suitably enriched language,
		see \cite[Definition~2.1]{Flenner} for details.
		Indeed, $(Kv, \Omega_2(K,v))$ and $(Lw, \Omega_2(L,w))$ are straightforwardly interpretable
		in $(\mathrm{RV}_K, p)$ and $(\mathrm{RV}_L, p)$,
		where $p$ is a constant standing for the element of the RV sort induced by the field element $p$.
		Therefore $(\mathrm{RV}_K, p) \not\equiv (\mathrm{RV}_L, p)$.
		
		The sets $\Omega_2$ above,
		and their relatives $\Omega_e$ obtained by replacing squares by $e$-th powers
		for a natural number $e$ coprime to $p$,
		occur naturally in investigations of the structures $(\mathrm{RV}_K,p)$
		and $(\mathrm{RV}_L,p)$,
		compare the quantifier elimination results in
		\cite[Section~4.1 and Section~5.4]{ACGZ_distality}.
	\end{rem}


\begin{thebibliography}{ACGZ22}

		\bibitem[ACGZ22]{ACGZ_distality}
		Matthias Aschenbrenner, Artem Chernikov, Allen Gehret, and Martin Ziegler,
		  \emph{Distality in valued fields and related structures}, Trans. Amer. Math.
		  Soc. \textbf{375} (2022), no.~7, 4641--4710.

		\bibitem[ADH17]{aschenbrenner-dries-hoeven2017asymptotic}
		Matthias Aschenbrenner, Lou van~den Dries, and Joris van~der Hoeven,
		  \emph{Asymptotic differential algebra and model theory of transseries}, Ann.
		  Math. Stud., no. 195, Princeton University Press, 2017.
		
		\bibitem[ADJ24]{ADJ_finitely-ramified}
		Sylvy Anscombe, Philip Dittmann, and Franziska Jahnke,
		  \emph{{Ax--Kochen--Ershov} principles for finitely ramified henselian
		  fields}, Trans. Amer. Math. Soc. \textbf{377} (2024), no.~12, 8963--8988.
		
		\bibitem[AJ22]{anscombe-jahnke2022cohen}
		Sylvy Anscombe and Franziska Jahnke, \emph{The model theory of {Cohen} rings},
		  Conflu. Math. \textbf{14} (2022), no.~2, 1--28.
		
		\bibitem[AJ24]{anscombe2024characterizing}
		\bysame, \emph{Characterizing {NIP} henselian fields}, J. Lond. Math. Soc.
		  \textbf{109} (2024), no.~3, e12868.
		
		\bibitem[AK65]{ax-kochen1965diophantine}
		James Ax and Simon Kochen, \emph{Diophantine problems over local fields I, II},
		Am. J. Math. \textbf{87} (1965), no.~3, 605--648.
		
		\bibitem[AK16]{AnscombeKuhlmann_extremal-tame}
		Sylvy Anscombe and Franz-Viktor Kuhlmann, \emph{Notes on extremal and tame
			valued fields}, J. Symb. Log. \textbf{81} (2016), no.~2, 400--416.
		
		\bibitem[CH99]{chatzidakis1999model}
		Zo{\'e} Chatzidakis and Ehud Hrushovski, \emph{Model theory of difference
		  fields}, Trans. Amer. Math. Soc. \textbf{351} (1999), no.~8, 2997--3071.
		
		\bibitem[CH03]{cherlin-hrushovski2003finite}
		Gregory Cherlin and Ehud Hrushovski, \emph{Finite structures with few types},
		  Ann. Math. Stud., no. 152, Princeton University Press, 2003.
		  
		\bibitem[Ers65]{ershov1965elementary}
		Yurii~Leonidovich Ershov, \emph{On elementary theories of local fields},
		Algebra Log. \textbf{4} (1965), no.~2, 5--30.
		
		\bibitem[Fle11]{Flenner}
		Joseph Flenner, \emph{Relative decidability and definability in henselian
		  valued fields}, J. Symb. Log. \textbf{76} (2011), no.~4, 1240--1260.

		\bibitem[JS20]{jahnke-simon2020nip}
		Franziska Jahnke and Pierre Simon, \emph{{NIP} henselian valued fields}, Arch.
		  Math. Logic \textbf{59} (2020), no.~1-2, 167--178.
		
		\bibitem[KPR86]{KuhlmannPankRoquette}
		Franz-Viktor Kuhlmann, Matthias Pank, and Peter Roquette, \emph{Immediate and
		  purely wild extensions of valued fields}, Manuscripta Math. \textbf{55}
		  (1986), no.~1, 39--67.
		
		\bibitem[Kuh94]{Kuhlmann_RV}
		Franz-Viktor Kuhlmann, \emph{Quantifier elimination for {Henselian} fields
		  relative to additive and multiplicative congruences}, Israel J. Math.
		  \textbf{85} (1994), no.~1, 277--306.
		
		\bibitem[Kuh16]{kuhlmann2016algebra}
		\bysame, \emph{The algebra and model theory of tame valued fields}, J. Reine
		  Angew. Math. \textbf{2016} (2016), no.~719, 1--43.
		
		\bibitem[Mar06]{marker2006model}
		David Marker, \emph{{Model theory: An Introduction}}, vol. 217, Springer
		  Science \& Business Media, 2006.
		
		\bibitem[Mar18]{marker2018model}
		\bysame, \emph{Lecture notes on model theory of valued fields}, University of
		  Illinois at Chicago (2018), available at
		  \url{https://homepages.math.uic.edu/~marker/valued_fields.pdf}.
		
		\bibitem[Pas89]{pas1989uniform}
		Johan Pas, \emph{Uniform p-adic cell decomposition and local zeta functions.},
		J. reine angew. Math. \textbf{399} (1989), 137--172.
		
		\bibitem[Rid17]{rideau2017some}
		Silvain Rideau, \emph{Some properties of analytic difference valued fields}, J.
		  Inst. Math. Jussieu \textbf{16} (2017), no.~3, 447--499.

		\bibitem[vdD14]{vdD2014lectures}
		Lou van~den Dries, \emph{Lectures on the model theory of valued fields}, Model
		  Theory in Algebra. Analysis and Arithmetic, Springer, 2014, pp.~55--157.

	\end{thebibliography}
\end{document}